\setlist[enumerate]{leftmargin=.5in}
\setlist[itemize]{leftmargin=.5in}
\crefname{hypothesis}{Hypothesis}{Hypotheses}
\title{Parareal Neural Networks Emulating a Parallel-in-time Algorithm\thanks{Submitted to the editors DATE.
\funding{The first author's work was supported by the National Research Foundation~(NRF) of Korea grant funded by the Korea government~(MSIT) (No.~2020R1A2C1A01004276).
The third author's work was supported by Basic Science Research Program through NRF funded by the Ministry of Education (No.~2019R1A6A1A10073887).}}}
\author{Chang-Ock Lee\thanks{Department of Mathematical Sciences, KAIST, Daejeon 34141, Korea 
  (\email{colee@kaist.edu}, \email{lyk92@kaist.ac.kr}).}
\and Youngkyu Lee\footnotemark[2]
\and Jongho Park \thanks{Natural Science Research Institute, KAIST, Daejeon 34141, Korea 
  (\email{jongho.park@kaist.ac.kr}).}}
\newcommand\br{\mathbf{r}}
\newcommand\bu{\mathbf{u}}
\newcommand\bx{\mathbf{x}}
\newcommand\by{\mathbf{y}}
\newcommand\bS{\mathbf{S}}
\newcommand\bU{\mathbf{U}}
\newcommand\bdelta{\bm{\delta}}
\begin{document}

\maketitle

% REQUIRED
\begin{abstract}
  As deep neural networks~(DNNs) become deeper, the training time increases. In this perspective, multi-GPU parallel computing has become a key tool in accelerating the training of DNNs.
  In this paper, we introduce a novel methodology to construct a parallel neural network that can utilize multiple GPUs simultaneously from a given DNN.
  We observe that layers of DNN can be interpreted as the time steps of a time-dependent problem and can be parallelized by emulating a parallel-in-time algorithm called parareal.
  The parareal algorithm consists of fine structures which can be implemented in parallel and a coarse structure which gives suitable approximations to the fine structures.
  By emulating it, the layers of DNN are torn to form a parallel structure, which is connected using a suitable coarse network.
  We report accelerated and accuracy-preserved results of the proposed methodology applied to VGG-16 and ResNet-$1001$ on several datasets.
\end{abstract}

% REQUIRED
\begin{keywords}
  deep neural network, parallel computing, time-dependent problem, parareal
\end{keywords}

% REQUIRED
\begin{AMS}
  68T01, 68U10, 68W10
\end{AMS}

\section{Introduction}
\label{Sec:Int}
Deep neural networks~(DNNs) have demonstrated success for many classification and regression tasks such as image recognition~\cite{he2016deep,lecun2015deep} and natural language processing~\cite{collobert2011natural,jean2014using}.
A principal reason for why DNN performs well is the depth of DNN, i.e., the number of sequential layers of DNN.
Each layer of DNN is composed of a combination of an affine transformation and a nonlinear activation function, e.g., a rectified linear unit~(ReLU).
A broad range of functions can be generated by stacking a number of layers with nonlinear activation functions so that DNN can be a model that fits the given data well~\cite{cybenko1989approximation,hornik1989multilayer}.
However, there are undesirable side effects of using many layers for DNN.
Due to the large number of layers in DNNs, DNN training is time-consuming and there are demands to reduce training time these days.
Recently, multi-GPU parallel computing has become an important topic for accelerating DNN training~\cite{ben2019demystifying,chen2018efficient,gunther2020layer}.

Data parallelism~\cite{ben2019demystifying} is a commonly used parallelization technique. In data parallelism, the training dataset is distributed across multiple GPUs and then processed separately.
For instance, suppose that we have 2~GPUs and want to apply data parallelism to the mini-batch gradient descent with the batch size $128$.
In this case, each GPU computes $64$-batch and the computed gradients are averaged.
In data parallelism, each GPU must possess a whole copy of the DNN model so that inter-GPU communication is required at every step of the training process in order to update all parameters of the model.
Therefore, the training time is seriously deteriorated when the number of layers in the model is large.
In order to resolve such a drawback, several asynchronous methodologies of data parallelism  were proposed~\cite{chen2016revisiting,lian2015asynchronous,zhang2016staleness}.
In asynchronous data parallelism, a parameter server in charge of parameter update is used; it collects computed gradients from other GPUs in order to update parameters, and then distributes the updated parameters to other GPUs.

On the other hand, model parallelism~\cite{huang2019gpipe,narayanan2019pipedream} is usually utilized when the capacity of a DNN exceeds the available memory of a single GPU.
In model parallelism, layers of DNN and their corresponding parameters are partitioned into multiple GPUs.
Since each GPU owns part of the model's parameters, the cost of inter-GPU communication in model parallelism is much less than the cost of data parallelism.
However, only one GPU is active at a time in the naive application of model parallelism.
To resolve the inefficiency, a pipelining technique called PipeDream~\cite{narayanan2019pipedream} which uses multiple mini-batches concurrently was proposed.
PipeDream has a consistency issue in parameter update that a mini-batch may start the training process before its prior mini-batch updates parameters.
To avoid this issue, another pipelining technique called Gpipe~\cite{huang2019gpipe}  was proposed; it divides each mini-bath into micro-batches and utilizes micro-batches for the simultaneous update of parameters.
However, experiments~\cite{chen2018efficient} have shown that the possible efficiency of Gpipe can not exceed 29\% of that of Pipedream.
Recently, further improvements of PipeDream and Gpipe were considered; see SpecTrain~\cite{chen2018efficient} and PipeMare~\cite{yang2019pipemare}.

There are several notable approaches of parallelism based on layerwise decomposition of the model~\cite{fok2018decoupling,gunther2020layer}.
Unlike the aforementioned ones, these approaches modify data propagation in the training process of the model.
G\"{u}nther et al.~\cite{gunther2020layer} replaced the sequential data propagation of layers in DNN by a nonlinear in-time multigrid method~\cite{falgout2014parallel}.
It showed strong scalability in a simple ResNet~\cite{he2016deep} when it was implemented on a computer cluster with multiple CPUs.
Fok et al.~\cite{fok2018decoupling} introduced WarpNet which was based on ResNet.
They replaced residual units~(RUs) in ResNet by the first-order Taylor approximations, which enabled parallel implementation.
In WarpNet, $(N-1)$ RUs are replaced by a single warp operator which can be treated in parallel using $N$ GPUs.
However, this approach requires data exchange at every warp operation so that it may suffer from a communication bottleneck as the DNN becomes deeper.

In this paper, we propose a novel paradigm of multi-GPU parallel computing for DNNs, called \textit{parareal neural network}.
In general, DNN has a feed-forward architecture.
That is, the output of DNN is obtained from the input by sequential compositions of functions representing layers.
We observe that sequential computations can be interpreted as time steps of a time-dependent problem.
In the field of numerical analysis, after a pioneering work of Lions et al.~\cite{lions2001resolution}, there have been numerous researches on parallel-in-time algorithms to solve time-dependent problems in parallel; see, e.g.,~\cite{gander2007analysis,maday2002parareal,minion2011hybrid}.
Motivated by these works, we present a methodology to transform a given feed-forward neural network to another neural network called parareal neural network which naturally adopts parallel computing.
The parareal neural network consists of fine structures which can be processed in parallel and a coarse structure which approximates the fine structures by emulating one of the parallel-in-time algorithms called parareal~\cite{lions2001resolution}.
Unlike the existing methods mentioned above, the parareal neural network can significantly reduce the time for inter-GPU communication because the fine structures do not communicate with each other but communicate only with the coarse structure.
Therefore, the proposed methodology is effective in reducing the elapsed time for dealing with very deep neural networks.
Numerical results confirm that the parareal neural network provides similar or better performance to the original network even with less training time.

The rest of this paper is organized as follows.
In~\Cref{Sec:Para}, we briefly summarize the parareal algorithm for time-dependent differential equations.
An abstract framework for the construction of the parareal neural network is introduced in~\Cref{Sec:PNN}.
In~\Cref{Sec:App}, we present how to apply the proposed methodology to two popular neural networks VGG-16~\cite{simonyan2014very} and ResNet-$1001$~\cite{he2016identity} with details.
Also, accelerated and accuracy-preserved results of parareal neural networks for VGG-16 and ResNet-1001 with datasets CIFAR-$10$, CIFAR-$100$~\cite{krizhevsky2009cifar}, MNIST~\cite{lecun1998gradient}, SVHN~\cite{netzer2011reading}, and ImageNet~\cite{deng2009imagenet} are given.
We conclude this paper with remarks in~\Cref{Sec:Conclu}.

%%%%%%%%%%%%%%%%%%%%%%%%%%%%%%%%%%%%%%%%%%%%%%%%%%%%%%%%%%%%%%%%%%%%%%
\section{The parareal algorithm}
\label{Sec:Para}
The parareal algorithm proposed by Lions et al.~\cite{lions2001resolution} is a parallel-in-time algorithm to solve time-dependent differential equations.
For the purpose of description, the following system of ordinary differential equations is considered:
\begin{equation}
       \dot{\bu}(t) = A \bu (t) \ \ \textrm{in} \ [0,T], \ \ \bu (0)=\bu_0,
       \label{eq_ode1}
       \end{equation}
where $A$:~$\mathbb{R}^m \rightarrow \mathbb{R}^m$ is an operator, $T>0$, and $\bu_0 \in \mathbb{R}^m$.
The time interval $[0,T]$ is decomposed into $N$ subintervals $0=T_0<T_1< \dots <T_N=T$.
First, an approximated solution $\{\bU_j^1\}_{j=0}^N$ of~\cref{eq_ode1} on the \textit{coarse} grid $\{ T_j \}_{j=0}^N$ is obtained by the backward Euler method with the step size $\Delta T_j= T_{j+1} - T_j$:
\begin{equation*}
       \frac{\bU_{j+1}^1- \bU_j^1}{\Delta T_j} = A \bU_{j+1}^1 , \ \ \bU_0^1 = \bu_0  \ \ \textrm{for} \ j=0,\dots ,N-1.
       \end{equation*}
Then in each time subinterval $[T_j,T_{j+1}]$, we construct a local solution $\bu_j^1$ by solving the following initial value problem:
\begin{equation}
       \dot{\bu}_j^1 (t) = A\bu_j^1 (t)  \ \ \textrm{in} \ [T_j,T_{j+1}], \ \ \bu_j^1(T_j)=\bU_j^1.
       \label{eq_ode2}
       \end{equation}
The computed solution $\bu_j^1$ does not agree with the exact solution $\bu$ in general since $\bU_j^1$ differs from $\bu(T_j)$.
For $k \geq 1$, a better coarse approximation $\{ \bU_j^{k+1} \}_{j=0}^N$ than $ \{ \bU_j^k \}_{j=0}^N$ is obtained by the \textit{coarse grid correction}: Let $\bU_0^{k+1}=\bU_0^k, \bS_0^k=0$; for $j=0, \dots, N-1$, we repeat the followings:
\begin{enumerate}
\item Compute the difference at the coarse node: $\bS_{j+1}^k = \bu_{j}^k(T_{j+1})-\bU_{j+1}^k$.
\item Propagate the difference to the next coarse node by the backward Euler method: \\$\frac{\bdelta_{j+1}^k-\bdelta_j^k} {\Delta T_j} = A \bdelta_{j+1}^k + \bS_j^k, \ \bdelta_0^k=0$.
\item Set $\bU_{j+1}^{k+1}=\bU_{j+1}^k+\bdelta_{j+1}^k$.
\end{enumerate}
That is, $\{ \bU_j^{k+1} \}_{j=0}^N$ is made by the correction with the propagated residual $\{ \bdelta_j^k \}_{j=0}^N$.
Using the updated coarse approximation $\{ \bU_j^{k+1} \}_{j=0}^N$, one obtains a new local solution $\bu_j^{k+1}$ in the same manner as~\cref{eq_ode2}:
\begin{equation}
       \dot{\bu}_j^{k+1}(t) = A \bu_j^{k+1} (t) \ \ \textrm{in} \ [T_j,T_{j+1}], \ \ \bu_j^{k+1}(T_j)=\bU_j^{k+1}.
       \label{eq_ode3}
       \end{equation}
It is well-known that $\bu_j^k$ converges to the exact solution $\bu$ uniformly as $k$ increases~\cite{bal2005convergence,gander2008nonlinear}.

\begin{figure}
\centering
\includegraphics[width=0.8\linewidth]{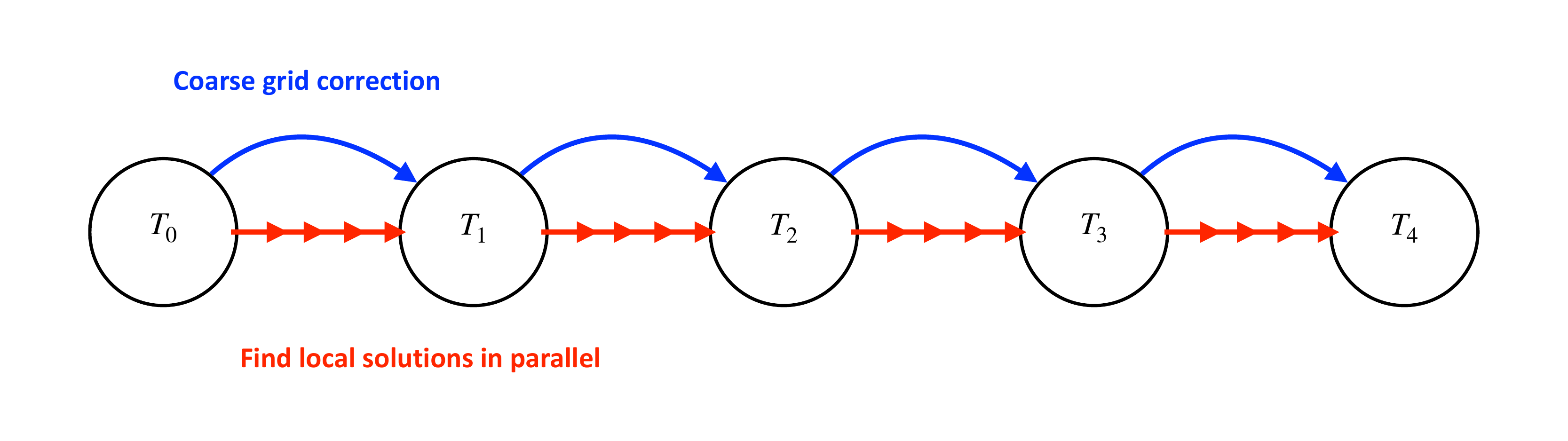}
\caption{Fine and coarse propagations in the parareal algorithm: the red lines which propagate from $T_j$ to $T_{j+1}$ represent~\cref{eq_ode2} and the blue curves which directly connect $T_j$ and $T_{j+1}$ represent~\cref{eq_ode3}.}
\label{parareal}
\end{figure}

Since~\cref{eq_ode3} can be solved independently in each time subinterval, we may assign the problem in each $[T_j,T_{j+1}]$ to the processor one by one and compute $\bu_j^{k+1}$ in parallel.
In this sense, the parareal algorithm is suitable for parallel computation on distributed memory architecture.
A diagram illustrating the parareal algorithm is presented in~\Cref{parareal}.

%%%%%%%%%%%%%%%%%%%%%%%%%%%%%%%%%%%%%%%%%%%%%%%%%%%%%%%%%%%%%%%%%%%%%%
\section{Parareal neural networks}
\label{Sec:PNN}
In this section, we propose a methodology to design a \textit{parareal neural network} by emulating the parareal algorithm introduced in~\Cref{Sec:Para} from a given feed-forward neural network. The resulting parareal neural network has an intrinsic parallel structure and is suitable for parallel computation using multiple GPUs with distributed memory simultaneously.

\subsection{Parallelized forward propagation}

\begin{figure}
  \centering
  \includegraphics[width=0.8\linewidth]{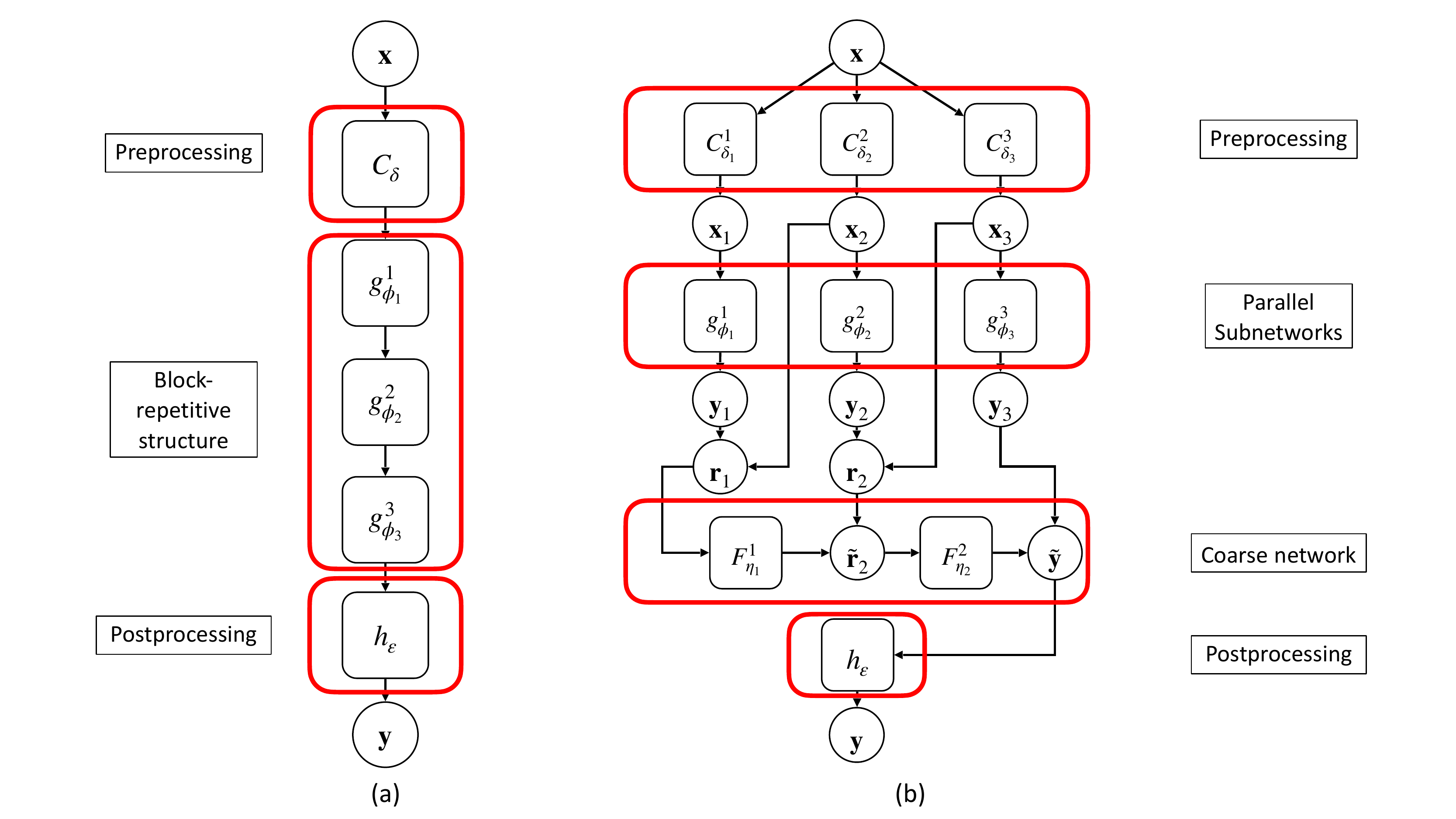}
  \caption{A feed-forward neural network and its corresponding parareal neural network: \textbf{(a)} Feed-forward neural network $f_{\theta}$, \textbf{(b)} Parareal neural network $\bar{f}_{\bar{\theta}}$ with $N$ parallel subnetworks~($N=3$).}
  \label{f2}
\end{figure}

Let $f_\theta$:~$X \rightarrow Y$ be a feed-forward neural network, where $X$ and $Y$ are the spaces of inputs and outputs, respectively, and $\theta$ is a vector consisting of parameters.
Since many modern neural networks such as~\cite{he2016identity,simonyan2014very,zagoruyko2016wide} have block-repetitive substructures, we may assume that $f_\theta$ can be written as the composition of three functions $C_{\delta}$:~$X \rightarrow W_0$, $g_{\phi}$:~$W_0 \rightarrow W_1$, and $h_{\varepsilon}$:~$W_1 \rightarrow Y$, i.e.,
\begin{equation*}
f_{\theta} = h_{\varepsilon} \circ g_{\phi} \circ C_{\delta}, \quad \theta = \delta \oplus \phi \oplus \varepsilon,
\end{equation*}
where $W_0$ and $W_1$ are vector spaces, $g_{\phi}$ is a block-repetitive substructure of $f_{\theta}$ with parameters $\phi$, $C_{\delta}$ is a \textit{preprocessing operator} with parameters $\delta$, and $h_{\varepsilon}$ is a \textit{postprocessing operator} with parameters $\varepsilon$.
Note that $\oplus$ represents a concatenation.
Examples of VGG-16~\cite{simonyan2014very} and ResNet-1001~\cite{he2016identity} will be given in~\Cref{Sec:App}.

For appropriate vector spaces $X_0$, $X_1$, \dots, $X_N$, we further assume that $g_{\phi}$ can be partitioned into $N$ subnetworks $\{ g_{\phi_j}^j$:~$X_{j-1} \rightarrow X_j \}_{j=1}^N$ which satisfy the followings:
\begin{itemize}
\item $X_0 = W_0$ and $X_N = W_1$,
\item $\phi = \bigoplus_{j=1}^N \phi_j$,
\item $g_{\phi} = g_{\phi_N}^N \circ g_{\phi_{N-1}}^{N-1} \circ \dots \circ g_{\phi_1}^1$.
\end{itemize}
See~\Cref{f2}(a) for a graphical description for the case $N=3$.
In the computation of $g_{\phi}$, the subnetworks $\{ g_{\phi_j}^{j} \}_{j=1}^N$ are computed in the sequential manner.
Regarding the subnetworks as subintervals of a time-dependent problem and adopting the idea of the parareal algorithm introduced in~\Cref{Sec:Para}, we construct a new neural network $\bar{f}_{\bar{\theta}}$:~$X \rightarrow Y$ which contains $\{ g_{\phi_j}^{j} \}_{j=1}^N$ as parallel subnetworks; the precise definition for parameters $\bar{\theta}$ will be given in~\cref{barf}.

Since the dimensions of the spaces $\{X_{j}\}_{j=0}^{N-1}$ are different for each $j$ in general,
we introduce preprocessing operators $C_{\delta_{j}}^j$:~$X \rightarrow X_{j-1}$ such that $C_{\delta_1}^1 = C_{\delta}$ and $C_{\delta_j}^j$ for $j = 2,\dots, N$ play similar roles to $C_{\delta}$; particular examples will be given in~\Cref{Sec:App}.
We write $\bx_j \in X_{j-1}$ and $\by_j \in X_j$ as follows:
\begin{equation}
\label{xjyj}
\bx_j = C_{\delta_j}^j (\bx) \textrm{ for } \bx \in X, \quad \by_j = g_{\phi_j}^j (\bx_j).
\end{equation}

Then, we consider neural networks $F_{\eta_{j}}^j$:~$X_j \rightarrow X_{j+1}$ with parameters $\eta_j$ for $j \ge 1$ such that it approximates $g_{\phi_{j+1}}^{j+1}$ well while it has a cheaper computational cost than $g_{\phi_{j+1}}^{j+1}$, i.e., $F_{\eta_j}^j \approx g_{\phi_{j+1}}^{j+1}$ and $\dim (\eta_j) \ll \dim (\phi_{j+1})$.
Emulating the coarse grid correction of the parareal algorithm, we assemble a network called \textit{coarse network} with building blocks $F_{\eta_j}^j$.
With inputs $\bx_{j+1}$, $\by_j$, and an output $\by \in Y$, the coarse network is described as follows:
\begin{subequations}
\label{correct}
\begin{align}
  \label{rk}
  \br_N &= \mathbf{0}, \quad \br_j = \by_j - \bx_{j+1} \quad \textrm{ for } j=1,\dots,N-1,  \\
  \label{tilderk}
  \tilde{\br}_1 &= \br_1, \quad \tilde{\br}_{j+1} = \br_{j+1} + F_{\eta_j}^j (\tilde{\br}_j) \quad \textrm{ for } j=1,\dots,N-1, \\
  \label{tildey}
  \tilde{\by} &= \by_N + \tilde{\br}_N.
\end{align}
\end{subequations}
That is, in the coarse network, the residual $\br_j$  at the interface between layers $g_{\phi_j}^j$ and $g_{\phi_{j+1}}^{j+1}$ propagates through shallow neural networks $F_{\eta_1}^1$, \dots, $F_{\eta_{N-1}}^{N-1}$.
Then the propagated residual is added to the output.

% Algorithm: Forward propagation
\begin{algorithm}[]
\caption{Forward propagation of the parareal neural network $\bar{f}_{\bar{\theta}}$}
\begin{algorithmic}[]
\label{Alg:para2}
\STATE Broadcast {$\mathbf{x}$} to all processors.
\FOR{$j=1,2,\dots,N$ \textbf{in parallel}}
\item \vspace{-0.5cm}\begin{align*}
  \displaystyle \mathbf{x}_{j} = C^{j}_{\delta_{j}}(\mathbf{x}), \ \mathbf{y}_{j} = g^{j}_{\phi_{j}}(\mathbf{x}_{j})
\end{align*}\vspace{-0.4cm}
\ENDFOR
\STATE Gather {$\mathbf{x}_{j}, \mathbf{y}_{j}$} from all processors.
\FOR{$j=1,2,\dots,N-1$}
\item \vspace{-0.5cm}\begin{equation*}
  \displaystyle \mathbf{r}_{j} = \mathbf{y}_{j} - \mathbf{x}_{j+1}
\end{equation*}\vspace{-0.4cm}
\ENDFOR
\STATE $\br_N = 0$, $\tilde{\br}_1 = \br_1$.
\FOR{$j=1,2,\dots,N-1$}
\item \vspace{-0.5cm}\begin{equation*}
  \displaystyle \tilde{\br}_{j+1} = \mathbf{r}_{j+1} + F^{j}_{\eta_{j}}(\tilde{\br}_{j})
\end{equation*}\vspace{-0.4cm}
\ENDFOR
\STATE $\displaystyle \tilde{\mathbf{y}} = \mathbf{y}_N + \tilde{\br}_N$
\STATE $\displaystyle \mathbf{y} = h_\varepsilon(\tilde{\by} )$
\end{algorithmic}
\end{algorithm}

% Algorithm: Backward propagation
\begin{algorithm}[]
\caption{Gradient computation for the parareal neural network $\bar{f}_{\bar{\theta}}$}
\begin{algorithmic}[]
\label{Alg:para3}
\STATE $\displaystyle \frac{\partial \bar{f}_{\bar{\theta}}}{\partial \varepsilon} = \frac{\partial h_{\varepsilon}}{\partial \varepsilon}$, $\displaystyle \mathcal{D}_{N}=\frac{\partial h_{\varepsilon}}{\partial \tilde{\by}}$, $\displaystyle \mathcal{D}_0=0$
\FOR{$j=N-1, N-2, \dots 1$}
\item \vspace{-0.5cm}\begin{align*}
\displaystyle \frac{\partial \bar{f}_{\bar{\theta}}}{\partial \eta_j} = \mathcal{D}_{j+1} \cdot \frac{\partial F_{\eta_j}^j}{\partial \eta_j}, \ \displaystyle \mathcal{D}_{j} = \mathcal{D}_{j+1} \cdot \frac{\partial F_{\eta_{j}}^{j}}{\partial \tilde{\br}_{j}}
\end{align*}\vspace{-0.4cm}
\ENDFOR
\STATE Send $\mathcal{D}_{j-1}$ and $\mathcal{D}_{j}$  to the $j$-th processor.\\
\FOR{$j=1,2,\dots,N$ \textbf{in parallel}}
\item \vspace{-0.5cm}\begin{align*}
\displaystyle \frac{\partial \bar{f}_{\bar{\theta}}}{\partial \phi_j} = \mathcal{D}_{j} \cdot \frac{\partial g_{\phi_j}^j}{\partial \phi_j}, \ \displaystyle \frac{\partial \bar{f}_{\bar{\theta}}}{\partial \delta_j} = \left( \mathcal{D}_{j} \cdot \frac{\partial g_{\phi_j}^j}{\partial \bx_j} - \mathcal{D}_{j-1} \right) \frac{\partial C_{\delta_j}^j}{\partial \delta_j}
\end{align*}\vspace{-0.4cm}
\ENDFOR
\end{algorithmic}
\end{algorithm}

Finally, the parareal neural network $\bar{f}_{\bar{\theta}}$ corresponding to the original network $f_{\theta}$ is defined as
\begin{equation}
\label{barf}
\bar{f}_{\bar{\theta}} (\bx) = h_\varepsilon(\tilde{\by} ), \quad
\bar{\theta} = \left( \bigoplus_{j=1}^N (\delta_j \oplus \phi_j ) \right)
\oplus \left( \bigoplus_{j=1}^{N-1} \eta_j \right) \oplus \varepsilon.
\end{equation}
That is, $\bar{f}_{\bar{\theta}}$ is composed of the preprocessing operators $\{C_{\delta_j}^j\}$, parallel subnetworks $\{g_{\phi_j}^j\}$, the coarse network $\{F_{\eta_j}^j\}$, and the postprocessing operator $h_\varepsilon$.
\Cref{f2}(b) illustrates $\bar{f}_{\bar{\theta}}$.
Since each $g_{\phi_j}^j \circ C_{\delta_j}^j$ lies in parallel, all computations related to $g_{\phi_j}^j \circ C_{\delta_j}^j$ can be done independently; parallel structures of forward and backward propagations for $\bar{f}_{\bar{\theta}}$ are described in~\Cref{Alg:para2} and~\Cref{Alg:para3}, respectively; detailed derivation of~\Cref{Alg:para3} will be provided in~\Cref{Subsec:Back}.
Therefore, multiple GPUs can be utilized to process $\{g_{\phi_j}^j \circ C_{\delta_j}^j\}$ simultaneously for each $j$.
In this case, one may expect significant decrease of the elapsed time for training $\bar{f}_{\bar{\theta}}$ compared to the original network $f_{\theta}$.
On the other hand, the coarse network cannot be parallelized since $\{F_{\eta_j}^j\}$ is computed in the sequential manner.
One should choose $F_{\eta_j}^j$ whose computational cost is as cheap as possible in order to reduce the bottleneck effect of the coarse network.

In the following proposition, we show that the proposed parareal neural network $\bar{f}_{\bar{\theta}}$ is constructed consistently in the sense that it recovers the original neural network $f_{\theta}$ under a simplified setting.

% Proposition: Equivalence between the original and parareal neural networks
\begin{proposition}[Consistency]
\label{Prop:equiv}
Assume that the original network $f_{\theta}$ is linear and $F_{\eta_j}^j = g_{\phi_{j+1}}^{j+1}$ for $j= 1, \dots, N-1$.
Then we have $\bar{f}_{\bar{\theta}} (\bx) = f_{\theta} (\bx)$ for all $\bx \in X$.
\end{proposition}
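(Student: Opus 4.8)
The plan is to unwind the definitions in~\cref{correct} and~\cref{barf} under the two simplifying hypotheses, and to compare the result with the sequential composition $f_\theta = h_\varepsilon \circ g_\phi \circ C_\delta$. Because $f_\theta$ is linear, every constituent map is linear; in particular each $g_{\phi_j}^j$ is linear, so I may freely distribute it over sums, and the preprocessing maps satisfy $C_{\delta_1}^1 = C_\delta$. Fix $\bx \in X$ and abbreviate $\bx_j = C_{\delta_j}^j(\bx)$, $\by_j = g_{\phi_j}^j(\bx_j)$ as in~\cref{xjyj}. The target identity, after applying $h_\varepsilon$ to both sides, reduces to showing $\tilde{\by} = (g_{\phi_N}^N \circ \cdots \circ g_{\phi_1}^1)(\bx_1)$, i.e. that the coarse correction reconstructs the full sequential forward pass through $g_\phi$.

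The key computation is an induction on $j$ establishing the closed form
\begin{equation*}
\tilde{\br}_j = \by_j - \bx_{j+1} + \big( g_{\phi_j}^j \circ \cdots \circ g_{\phi_2}^2 \big)(\by_1 - \bx_2) + \cdots + g_{\phi_j}^j(\by_{j-1} - \bx_j),
\end{equation*}
which more compactly reads $\tilde{\br}_j = \by_j + \sum_{i=1}^{j-1}\big(g_{\phi_j}^j \circ \cdots \circ g_{\phi_{i+1}}^{i+1}\big)(\by_i) - \bx_{j+1} - \sum_{i=1}^{j-1}\big(g_{\phi_j}^j \circ \cdots \circ g_{\phi_{i+1}}^{i+1}\big)(\bx_{i+1})$, with the convention that the empty composition is the identity. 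The base case $\tilde{\br}_1 = \br_1 = \by_1 - \bx_2$ is immediate from~\cref{rk} and~\cref{tilderk}. For the inductive step I use $F_{\eta_j}^j = g_{\phi_{j+1}}^{j+1}$ together with linearity to compute $\tilde{\br}_{j+1} = \br_{j+1} + g_{\phi_{j+1}}^{j+1}(\tilde{\br}_j)$, substitute the inductive formula, and distribute $g_{\phi_{j+1}}^{j+1}$ over the sum; the telescoping structure in the $\bx$-terms is what makes things collapse. Indeed, the crucial observation is that $\bx_{i+1}$ and $\by_i = g_{\phi_i}^i(\bx_i)$ are \emph{not} related in general — but when I carry the induction all the way to $j = N$ and add $\by_N$ via~\cref{tildey}, the terms involving $\bx_2, \dots, \bx_N$ all cancel against matching terms, leaving only $\big(g_{\phi_N}^N \circ \cdots \circ g_{\phi_1}^1\big)(\bx_1)$ plus a residual chain. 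Let me restate this more carefully: what actually happens is that $\tilde{\br}_N = \sum_{i=1}^{N}\big(g_{\phi_N}^N\circ\cdots\circ g_{\phi_{i+1}}^{i+1}\big)(\by_i) - \sum_{i=1}^{N-1}\big(g_{\phi_N}^N\circ\cdots\circ g_{\phi_{i+1}}^{i+1}\big)(\bx_{i+1})$, and since $\by_i = g_{\phi_i}^i(\bx_i)$ each summand $\big(g_{\phi_N}^N\circ\cdots\circ g_{\phi_{i+1}}^{i+1}\big)(\by_i)$ for $i \le N-1$ equals $\big(g_{\phi_N}^N\circ\cdots\circ g_{\phi_{i}}^{i}\big)(\bx_i)$, which after reindexing cancels the $\bx_{i}$ term coming from the next index. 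After the dust settles, $\tilde{\by} = \by_N + \tilde{\br}_N = \big(g_{\phi_N}^N \circ \cdots \circ g_{\phi_1}^1\big)(\bx_1) = g_\phi(C_\delta(\bx))$, and applying $h_\varepsilon$ gives $\bar{f}_{\bar\theta}(\bx) = f_\theta(\bx)$.

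I expect the main obstacle to be purely bookkeeping: setting up the right indexing for the nested compositions and the telescoping cancellation without sign or off-by-one errors, especially handling the boundary terms $\br_N = \mathbf{0}$ and $\tilde{\br}_1 = \br_1$ correctly and keeping track of which empty compositions are the identity. There is no analytic difficulty — linearity is used exactly to push the $g_{\phi_j}^j$'s through sums, and the hypothesis $F_{\eta_j}^j = g_{\phi_{j+1}}^{j+1}$ is used exactly to turn~\cref{tilderk} into a recursion with the same building blocks as the sequential pass. A clean alternative that sidesteps some of the indexing is to prove, by induction on $j$, the single invariant $\bx_{j+1} + \tilde{\br}_j = \big(g_{\phi_j}^j \circ \cdots \circ g_{\phi_1}^1\big)(\bx_1)$ for $j = 1, \dots, N-1$, i.e. that ``interface value plus accumulated coarse correction equals the true sequential state''; the step then only requires adding $\by_{j+1} - \bx_{j+2}$ and applying $g_{\phi_{j+1}}^{j+1}$ to the invariant, and the $j = N$ case combined with~\cref{tildey} finishes the proof. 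I would present the argument in this second form, since it isolates the one-line inductive identity that carries the whole proof.
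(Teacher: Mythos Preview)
Your proposal is correct, and your preferred ``clean alternative'' invariant $\bx_{j+1} + \tilde{\br}_j = (g_{\phi_j}^j \circ \cdots \circ g_{\phi_1}^1)(\bx_1)$ is precisely the paper's approach in different notation: the paper introduces $P_j := F_{\eta_j}^j(\tilde{\br}_j) = \tilde{\br}_{j+1} - \br_{j+1}$ and proves by induction that $P_j = (g_{\phi_{j+1}}^{j+1}\circ\cdots\circ g_{\phi_1}^1\circ C_\delta)(\bx) - \by_{j+1}$, which is your invariant shifted by one index. (In your first, more explicit route the displayed sum for $\tilde{\br}_N$ has a small off-by-one --- the $\by$-sum should run to $N-1$, not $N$, since $\br_N = \mathbf{0}$ --- but you already flagged bookkeeping as the risk and your second formulation avoids it.)
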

\begin{proof}
  We define a function $P_j$:~$X \rightarrow X_{j+1}$ inductively as follows:
  \begin{equation}
  \label{P_recur}
  P_0 (\bx) = 0, \quad
  P_j(\bx) = F_{\eta_j}^j \left( (g_{\phi_j}^j \circ C_{\delta_j}^j ) (\bx) - C_{\delta_{j+1}}^{j+1} (\bx) + P_{j-1} (\bx) \right), \quad 1 \leq j \leq N-1.
  \end{equation}
  Then it follows that
  \begin{equation}
  \label{f_P}
  \bar{f}_{\bar{\theta}} (\bx) = h_{\varepsilon} \left((g_{\phi_N}^N \circ C_{\delta_N}^N) (\mathbf{x}) + P_{N-1}(\bx) \right).
  \end{equation}

  First, we show by mathematical induction that
  \begin{equation}
  \label{P}
  P_j (\bx) = (g_{\phi_{j+1}}^{j+1} \circ g_{\phi_{j}}^{j} \circ \dots g_{\phi_{1}}^{1} \circ C_{\delta_1}^1) (\bx) - (g_{\phi_{j+1}}^{j+1} \circ C_{\delta_{j+1}}^{j+1}) (\bx), \quad 1 \leq j \leq N-1.
  \end{equation}
  The case $j=1$ is straightforward from~\cref{P_recur}.
  Suppose that~\cref{P} holds for $j = m-1$.
  Since the original network $f_{\theta}$ is linear and $F_{\eta_j}^j = g_{\phi_{j+1}}^{j+1}$, we get
  \begin{equation*} \begin{split}
  P_m(\bx) &= F_{\eta_m}^m \left( (g_{\phi_m}^m \circ C_{\delta_m}^m ) (\bx) - C_{\delta_{m+1}}^{m+1} (\bx) + P_{m-1} (\bx) \right) \\
  &= (g_{\phi_{m+1}}^{m+1} \circ g_{\phi_{m}}^{m} \circ C_{\delta_m}^m) (\bx) - (g_{\phi_{m+1}}^{m+1} \circ C_{\delta_{m+1}}^{m+1}) (\bx) + (g_{\phi_{m+1}}^{m+1} \circ P_{m-1}) (\bx) \\
  &= (g_{\phi_{m+1}}^{m+1} \circ g_{\phi_{m}}^{m} \circ \dots g_{\phi_{1}}^{1} \circ C_{\delta_1}^1) (\bx) - (g_{\phi_{m+1}}^{m+1} \circ C_{\delta_{m+1}}^{m+1}) (\bx),
  \end{split} \end{equation*}
  where the last equality is due to the induction hypothesis.
  Hence,~\cref{P} also holds for $j=m$, which implies that it is true for all $j$.

  Combining~\cref{f_P} and~\cref{P}, we obtain
  \begin{equation*} \begin{split}
  \bar{f}_{\bar{\theta}} (\bx) &= h_{\varepsilon} \left((g_{\phi_N}^N \circ C_{\delta_N}^N) (\mathbf{x}) + P_{N-1}(\bx) \right) \\
  &= (h_{\varepsilon} \circ g_{\phi_{N}}^{N} \circ g_{\phi_{N-1}}^{N-1} \circ \dots g_{\phi_{1}}^{1} \circ C_{\delta_1}^1 ) (\bx) \\
  &= (h_{\varepsilon} \circ g_{\phi_{N}}^{N} \circ g_{\phi_{N-1}}^{N-1} \circ \dots g_{\phi_{1}}^{1} \circ C_{\delta} ) (\bx) \\
  &= f_{\theta} (\bx),
  \end{split} \end{equation*}
  which completes the proof.
\end{proof}

\Cref{Prop:equiv} presents a guideline on how to design the coarse network of $\bar{f}_{\bar{\theta}}$.
Under the assumption that $f_{\theta}$ is linear, a sufficient condition to ensure that $\bar{f}_{\bar{\theta}} = f_{\theta}$ is $F_{\eta_j}^j = g_{\phi_{j+1}}^{j+1}$ for all $j$.
Therefore, we can say that it is essential to design the coarse network with $F_{\eta_j}^j \approx g_{\phi_{j+1}}^{j+1}$ to ensure that the performance of $\bar{f}_{\bar{\theta}}$ is as good as that of $f_{\theta}$.
Detailed examples will be given in~\Cref{Sec:App}.

On the other hands, the propagation of the coarse network in the parareal neural network is similar to gradient boosting~\cite{friedman2001greedy,mason2000boosting}, one of the ensemble techniques.
In \cref{correct}, the coarse network satisfies $F_{\eta_j}^{j}(\tilde{\br}_j)=\tilde{\br}_{j+1}-\br_{j+1}$.
Let $\mathcal{L}(\bx,\by)=\frac{1}{2}\| \bx-\by \|^2$, then we have
\begin{equation*}
  \displaystyle F_{\eta_j}^{j}(\tilde{\br}_j)=\tilde{\br}_{j+1}-\br_{j+1}=- \frac{\partial \mathcal{L}(\tilde{\br}_{j+1},\br_{j+1})}{\partial \br_{j+1}}.
\end{equation*}
From the viewpoint of gradient boosting, the propagation of coarse network can be expressed by the following gradient descent method:
\begin{equation*}
  \displaystyle \tilde{\br}_{j+1}= \br_{j+1} - \frac{\partial \mathcal{L}(\tilde{\br}_{j+1},\br_{j+1})}{\partial \br_{j+1}}.
\end{equation*}
% In other words, as an optimization problem, it is the same as finding $\eta_{j}$ such that
% \begin{equation*}
%   \displaystyle \eta_{j}=\arg\min_{\eta} \mathcal{L}(\tilde{\br}_{j+1}, \br_{j+1}+F_{\eta}^{j}(\tilde{\br}_j)).
% \end{equation*}
Thus, it can be understood that the coarse network constructed by emulating the coarse grid correction of the parareal algorithm corrects the residuals at the interface by the gradient descent method, i.e., it reduces the difference between $\bx_{j+1}$ and $\by_{j}$ at each interface.

Furthermore, we can think of the preprocessing $C_{\delta_{j}}^{j}$ and subnetwork $g_{\phi_{j}}^{j}$ in parareal neural network as a single shallow network $g_{\phi_{j}}^{j} \circ C_{\delta_{j}}^{j}$.
Then, the parareal neural network can be thought of as a network in which several shallow neural networks are stacked, such as the Stacked generalization~\cite{wolpert1992stacked} of the ensemble technique.
Thanks to the coarse network, the parareal neural network does not simply stack the shallow networks, but behaves like a deep neural network that sequentially computes the parallel subnetworks as mentioned in~\Cref{Prop:equiv}.

\subsection{Details of backward propagation}
\label{Subsec:Back}
We present a detailed description on the backward propagation for the parareal neural network $\by = \bar{f}_{\bar{\theta}}(\bx)$.
Partial derivatives $\frac{\partial \bar{f}_{\bar{\theta}}}{\partial \varepsilon}$ and $\frac{\partial \by}{\partial \tilde{\by}}$ regarding to the postprocessing operator $h_{\varepsilon}$ are computed directly from~\cref{barf}:
\begin{equation*}
\frac{\partial \bar{f}_{\bar{\theta}}}{\partial \varepsilon} = \frac{\partial h_{\varepsilon}}{\partial \varepsilon}, \quad
\frac{\partial \by}{\partial \tilde{\by}} = \frac{\partial h_{\varepsilon}}{\partial \tilde{\by}}.
\end{equation*}
It is clear from~\cref{correct} that
\begin{equation}
\label{ingredient1}
\frac{\partial \br_j}{\partial \by_j} = 1, \quad
\frac{\partial \br_j}{\partial \bx_{j+1}} = -1, \quad
\frac{\partial \tilde{\br}_j}{\partial \br_j} = 1, \quad
\frac{\partial \tilde{\by}}{\partial \by_N} = 1, \quad
\frac{\partial \tilde{\by}}{\partial \tilde{\br}_N} = 1.
\end{equation}
Moreover, by~\cref{tilderk}, we get
\begin{equation}
\label{ingredient2}
\frac{\partial \tilde{\br}_{j+1}}{\partial \eta_j} = \frac{\partial F_{\eta_j}^j}{\partial \eta_j}, \quad
\frac{\partial \tilde{\br}_{j+1}}{\partial \tilde{\br}_j} = \frac{\partial F_{\eta_j}^j}{\partial \tilde{\br}_j}.
\end{equation}
Invoking the chain rule with~\cref{ingredient1} and~\cref{ingredient2}, $\frac{\partial \bar{f}_{\bar{\theta}}}{\partial \eta_j}$ is described as
\begin{equation}
\label{etak}
\frac{\partial \bar{f}_{\bar{\theta}}}{\partial \eta_j}
= \frac{\partial \by}{\partial \tilde{\by}} \displaystyle \frac{\partial \tilde{\by}}{\partial \tilde{\br}_N} \left( \prod_{l=j+1}^{N-1} \frac{\partial \tilde{\br}_{l+1}}{\partial \tilde{\br}_{l}} \right) \frac{\partial \tilde{\br}_{j+1}}{\partial \eta_j}
= \frac{\partial h_{\varepsilon}}{\partial \tilde{\by}} \left( \prod_{l=j+1}^{N-1} \frac{\partial F_{\eta_l}^l}{\partial \tilde{\br}_{l}} \right) \frac{\partial F_{\eta_j}^j}{\partial \eta_j}.
 \end{equation}

On the other hand, partial derivatives $\frac{\partial \bx_j}{\partial \delta_j} $, $\frac{\partial \by_j}{\partial \phi_j}$, and $\frac{\partial \by_j}{\partial \bx_j}$ can be computed in parallel by~\cref{xjyj}:
\begin{equation}
\label{ingredient3}
\frac{\partial \bx_j}{\partial \delta_j} = \frac{\partial C_{\delta_j}^j}{\partial \delta_j}, \quad
\frac{\partial \by_j}{\partial \phi_j} = \frac{\partial g_{\phi_j}^j}{ \partial \phi_j}, \quad
\frac{\partial \by_j}{\partial \bx_j} = \frac{\partial g_{\phi_j}^j}{ \partial \bx_j} .
\end{equation}
Using~\cref{ingredient1},~\cref{ingredient2}, and~\cref{ingredient3}, it follows that
\begin{align}
\label{phij}
\frac{\partial \bar{f}_{\bar{\theta}}}{\partial \phi_j} &= \begin{cases}\displaystyle \frac{\partial \by}{\partial \tilde{\by}} \displaystyle \frac{\partial \tilde{\by}}{\partial \tilde{\br}_N} \left( \prod_{l=j}^{N-1} \frac{\partial \tilde{\br}_{l+1}}{\partial \tilde{\br}_{l}} \right) \frac{\partial \tilde{\br}_j}{\partial \br_j} \frac{\partial \br_{j}}{\partial \by_j} \frac{\partial \by_j}{\partial \phi_j} & \textrm{ if } j < N  , \notag\\
\displaystyle \frac{\partial \by}{\partial \tilde{\by}} \frac{\partial \tilde{\by}}{\partial \by_N} \frac{\partial \by_N}{\partial \phi_N} & \textrm{ if } j = N \end{cases} \notag \\
&= \frac{\partial h_{\varepsilon}}{\partial \tilde{\by}} \left( \prod_{l=j}^{N-1} \frac{\partial F_{\eta_l}^l}{\partial \tilde{\br}_{l}} \right) \frac{\partial g_{\phi_j}^j}{\partial \phi_j}.
\end{align}
Similarly, with the convention $\frac{\partial F_{\eta_0}^0}{\partial \tilde{\br}_0} = 0$, we have
\begin{align}
\label{deltaj}
\frac{\partial \bar{f}_{\bar{\theta}}}{\partial \delta_j} &=
\begin{cases}\displaystyle \frac{\partial \by}{\partial \tilde{\by}} \displaystyle \frac{\partial \tilde{\by}}{\partial \tilde{\br}_N} \left( \prod_{l=j}^{N-1} \frac{\partial \tilde{\br}_{l+1}}{\partial \tilde{\br}_{l}} \right) \left( \frac{\partial \tilde{\br}_j}{\partial \br_j} \frac{\partial \br_{j}}{\partial \by_j} \frac{\partial \by_j}{\partial \bx_j} + \frac{\partial \tilde{\br}_j}{\partial \tilde{\br}_{j-1}} \frac{\partial \tilde{\br}_{j-1}}{\partial \br_{j-1}} \frac{\partial \br_{j-1}}{\partial \bx_j} \right) \frac{\partial \bx_j}{\partial \delta_j} & \textrm{ if } j < N  , \notag\\
\displaystyle \frac{\partial \by}{\partial \tilde{\by}} \left( \frac{\partial \tilde{\by}}{\partial \by_N} \frac{\partial \by_N}{\partial \bx_N} + \frac{\partial \tilde{\by}}{\partial \tilde{\br}_N} \frac{\partial \tilde{\br}_N}{\partial \tilde{\br}_{N-1}} \frac{\partial \tilde{\br}_{N-1}}{\partial \br_{N-1}} \frac{\partial \br_{N-1}}{\partial \bx_N} \right) \frac{\partial \bx_N}{\partial \delta_N} & \textrm{ if } j = N \end{cases} \notag \\
&= \frac{\partial h_{\varepsilon}}{\partial \tilde{\by}} \left( \prod_{l=j}^{N-1} \frac{\partial F_{\eta_l}^l}{\partial \tilde{\br}_{l}} \right) \left( \frac{\partial g_{\phi_j}^j}{\partial \bx_j} - \frac{\partial F_{\eta_{j-1}}^{j-1}}{\partial \tilde{\br}_{j-1}} \right) \frac{\partial C_{\delta_j}^j}{\partial \delta_j}.
\end{align}
For efficient computation, the value of $\frac{\partial h_{\varepsilon}}{\partial \tilde{\by}} \left( \prod_{l=j}^{N-1} \frac{\partial F_{\eta_l}^l}{\partial \tilde{\br}_l} \right)$ can be stored during the evaluation of~\cref{etak} and then used in~\cref{phij} and~\cref{deltaj}.
Such a technique is described in~\Cref{Alg:para3}.

%%%%%%%%%%%%%%%%%%%%%%%%%%%%%%%%%%%%%%%%%%%%%%%%%%%%%%%%%%%%%%%%%%%%%%
\section{Applications}
\label{Sec:App}
In this section, we present applications of the proposed methodology to two existing convolutional neural networks~(CNNs): VGG-16~\cite{simonyan2014very} and ResNet-1001~\cite{he2016identity}.
For each network, we deal with details on the construction of parallel subnetworks and a coarse network.
Also, numerical results are presented showing that the proposed parareal neural network gives comparable or better results than given feed-forward neural network and other variants in terms of both training time and classification performance.

First, we present details on the datasets we used.
The CIFAR-$m$~($m=10, 100$) dataset consists of $32 \times 32$ colored natural images and includes 50,000 training and 10,000 test samples with $m$ classes.
The SVHN dataset is composed of $32 \times 32$ colored digit images; there are 73,257 and 26,032 samples for training and test, respectively, with additional 531,131 training samples.
However, we did not use the additional ones for training.
MNIST is a classic dataset which contains handwritten digits encoded in $28 \times 28$ grayscale images.
It includes 55,000 training, 5,000 validation, and 10,000 test samples.
In our experiments, the training and validation samples are used as training data and the test samples as test data.
ImageNet is a dataset which contains 1000 classes of $224 \times 224$ colored natural images.
It includes 1,280,000 training and 50,000 test samples.

We adopted a data augmentation technique in~\cite{lee2015deeply} for CIFAR datasets; four pixels are padded on each side of images, and $32 \times 32$ crops are randomly sampled from the padded images and their horizontal flips.
All neural networks in this section were trained using the stochastic gradient descent with the batch size $128$, weight decay $0.0005$, Nesterov momentum $0.9$, and weights initialized as in~\cite{he2015delving}.
The initial learning rate was set to $0.1$, and was reduced by a factor of $10$ in the $80$th and $120$th epochs.
For ImageNet datasets, the input image is $224 \times 224$ randomly cropped from a resized image using the scale and aspect ratio augmentation~\cite{szegedy2015going}.
Hyperparameter settings are the same as other cases except the followings; the weight decay $0.0001$, total epoch $90$, and the learning rate was reduced by a factor of 10 in the 30th and 60th epochs.
All networks were implemented in Python with PyTorch and
all computations were performed on a cluster equipped with Intel Xeon Gold 5515 (2.4GHz, 20C) CPUs, NVIDIA Titan RTX GPUs, and the operating system Ubuntu 18.04 64bit.

% Subsection: VGG-16
\subsection{VGG-16}
In general, CNNs without skip connections~(see, e.g.,~\cite{krizhevsky2012imagenet,simonyan2014very}) can be represented as
\begin{equation*}
 \mathbf{x}_l = H_l(\mathbf{x}_{l-1}),
 \end{equation*}
where $\mathbf{x}_l$ is an output of the $l$th layer of the network and $H_l$ is a nonlinear transformation consisting of convolutions, batch normalization and ReLU activation.
Each layer of VGG-16~\cite{simonyan2014very}, one of the most popular CNNs without skip connections, consists of multiple $3 \times 3$ convolutions.
The network consists of 5 stages of convolutional blocks and 3 fully connected layers. Each convolutional block is a composition of double or triple convolutions and a max pooling operation.

\begin{table}
  \caption{Architecture of VGG-16 for the dataset ImageNet.
  A layer consisting of an $n \times n$ convolution with $k$-channel ouput, a max pooling with kernel size $2$ and stride $2$, and a $k$-way fully connected layer are denoted by $[n \times n, k]$, maxpool, and $k$-d~FC, respectively.}
  \label{vgg}
  \centering
  \begin{tabular}{c|c|c}
    \toprule
    Layer                                  & Output size & VGG-16                                                                    \\
    \midrule
    Preprocessing                               & $224 \times 224$   & \begin{tabular}[l]{@{}l@{}}{[}$3 \times 3, 64${]}\end{tabular}                                                                   \\ \hline
    \multirow{5}{*}{\begin{tabular}[c]{@{}c@{}}Block-repetitive\\ substructure\end{tabular}}& $112 \times 112$   & \begin{tabular}[l]{@{}l@{}}{[}$3 \times 3, 64${]} $+$ maxpool\end{tabular}       \\ \cline{2-3}
                                                & $56 \times 56$     & \begin{tabular}[c]{@{}l@{}}{[}$3 \times 3, 128${]}$\times 2$ $+$ maxpool\end{tabular}      \\ \cline{2-3}
                                                & $28 \times 28$     & \begin{tabular}[c]{@{}l@{}}{[}$3 \times 3, 256${]}$\times 3$ $+$ maxpool\end{tabular}      \\ \cline{2-3}
                                                & $14 \times 14$     & \begin{tabular}[c]{@{}l@{}}{[}$3 \times 3, 512${]}$\times 3$ $+$ maxpool\end{tabular}      \\ \cline{2-3}
                                                & $7 \times 7$       & \begin{tabular}[c]{@{}l@{}}{[}$3 \times 3, 512${]}$\times 3$ $+$ maxpool\end{tabular}      \\ \hline
    Postprocessing                              & $1 \times 1$       & \begin{tabular}[c]{@{}l@{}}{[}4096-d FC{]}$\times 2$ $+$ 1000-d FC\end{tabular} \\
    \bottomrule
    \end{tabular}
  \end{table}

\subsubsection{Parareal transformation}
First, we describe the structure of VGG-16 which was designed for the classification problem of ImageNet~\cite{deng2009imagenet} with the terminology introduced in~\Cref{Sec:PNN}.
Inputs for VGG-16 are 3-channel images with $224 \times 224$ pixels, i.e., $X = \mathbb{R}^{3 \times 224 \times 224}$.
The output space $Y$ is given by $Y = \mathbb{R}^{1000}$, where $1,000$ is the number of classes of ImageNet.
We set the preprocessing operator $C_{\delta}$:~$X \rightarrow W_0 = \mathbb{R}^{64 \times 224 \times 224}$ by the first $3 \times 3$ convolution layer in VGG-16.
We refer to the remaining parts of VGG-16 as the block-repetitive structure $g_{\phi}$:~$W_0 \rightarrow W_1$ with $W_1 = \mathbb{R}^{512 \times 7 \times 7}$ except for the last three fully connected layers.
Finally, the postprocessing operator $h_{\varepsilon}$:~$W_1 \rightarrow Y$  is the composition of the three fully connected layers.
\Cref{vgg} shows the detailed architecture of VGG-16.

In order to construct a parareal neural network with $N$ parallel subnetworks for VGG-16, we have to specify its components $g_{\phi_j}^j$, $C_{\delta_j}^j$, and $F_{\eta_j}^j$.
For simplicity, we assume that $N = 4$.
We decompose $g_{\phi}$ into $4$ parts such that the output size of each part is $56 \times 56$, $28 \times 28$, $14 \times 14$, and $7 \times 7$, respectively.
Then, the block-repetitive structure $g_{\phi}$ can be decomposed as
\begin{equation*}
g_{\phi} = g_{\phi_{4}}^{4} \circ g_{\phi_{3}}^{3} \circ g_{\phi_{2}}^{2} \circ g_{\phi_{1}}^{1},
\end{equation*}
where each of $g_{\phi_{j}}^{j}$:~$X_{j-1} \rightarrow X_{j}$ with
\begin{align*}
X_{j} = \begin{cases}\mathbb{R}^{128 \times 56 \times 56} & \textrm{ for } j=1\\ \mathbb{R}^{256 \times 28 \times 28} & \textrm{ for } j=2,\\ \mathbb{R}^{512 \times 14 \times 14} & \textrm{ for } j=3,\\ \mathbb{R}^{512 \times 7 \times 7} & \textrm{ for } j=4,\end{cases} \quad \phi = \bigoplus_{j=1}^4 \phi_j.
\end{align*}

Recall that the main role of the preprocessing operator $C_{\delta_j}^j$:~$X \rightarrow X_{j-1}$ is to transform an input $\bx \in X$ to fit in the space $X_{j-1}$.
In this perspective, we simply set $C_{\delta_1}^1 = C_{\delta}$ and $C_{\delta_j}^j$ for $j>1$ by a $1 \times 1$ convolution to match the number of channels after appropriate number of $3 \times 3$ max pooling layers with stride $2$ to match the image size.

According to~\Cref{Prop:equiv}, it is essential to design the coarse network such that $F_{\eta_j}^{j} \approx g_{\phi_{j+1}}^{j+1}$ in order to ensure the performance of the parareal neural network.
We simply define $F_{\eta_{j}}^j$:~$X_j \rightarrow X_{j+1}$ by the composition of two $3 \times 3$ convolutions and a max pooling with kernel size $2$ and stride $2$, which has a simplified structure of $g_{\phi_{j+1}}^{j+1}$ with fewer parameters.

\subsubsection{Numerical results}
\begin{table}
  \caption{Error rates~(\%) on the ImageNet dataset of VGG-16 and Parareal~VGG-16-4.}
  \label{vggresult}
  \centering
  \resizebox{0.9\linewidth}{!}{
  \begin{tabular}{ccccc}
    \toprule
    Network &\begin{tabular}{c}Subnetwork \\ Parameters \end{tabular}&\begin{tabular}{c}Coarse network \\ Parameters \end{tabular}& \begin{tabular}{c}Total \\ Parameters \end{tabular}& Error rate (\%) \\
    \midrule
    VGG-16 &-&-& 138.4M& 32.30\\
    \midrule
    Parareal VGG-16-4 &3.7M& 9.1M&147.5M& 30.97\\
    \bottomrule
  \end{tabular}}
\end{table}
We present the comparison results with Parareal VGG-16-4 and VGG-16 on ImageNet dataset.
Note that Parareal VGG-16-4 denotes the parareal neural network version of VGG-16 with $N=4$.
\Cref{vggresult} shows that the error rate of Parareal VGG-16-4 is smaller than that of VGG-16.
From this result, it can be seen that the accuracy is guaranteed even when the parareal neural network is applied to a network with small number of layers.

\begin{table}
  \caption{Forward/backward computation time for VGG-16 and Parareal VGG-16-4.
  It is a measure of the time taken in one iteration for ImageNet dataset input $\bx \in \mathbb{R}^{3 \times 224 \times 224}$ with batch size $128$.
  One interation means one step of updating all parameters with SGD.
  }
  \label{gradvgg}
  \centering
  \resizebox{\linewidth}{!}{
  \begin{tabular}{lccccc}
    \toprule
    \multicolumn{6}{c}{Virtual wall-clock time~(ms)}\\
    \cmidrule(r){1-6}
    Network & Preprocessing     & \begin{tabular}{c}Parallel\\subnetworks\end{tabular}    & \begin{tabular}{c}Coarse\\network\end{tabular} & Postprocessing  & Total \\
    \midrule
    VGG-16&18.73/321.22&187.21/3302.11&-&3.33/5.82&209.27/3629.15\\
    \midrule
    Parareal VGG-16-4&18.78/334.10&46.59/453.25&62.84/1413.49&2.96/5.90&131.17/2206.74\\
    % VGG-16&0.90/11.26&7.65/269.2&-&0.05/1.77&8.60/282.23\\
    % \midrule
    % Parareal VGG-16-4&0.96/15.02&1.81/43.37&1.44/139.21&0.04/2.72&4.25/200.52\\
    \bottomrule
  \end{tabular}}
\end{table}

Next, we investigate the elapsed time for forward and backward propagations of parareal neural networks, which are the most time-consuming part of network training.
\Cref{gradvgg} shows the virtual wall-clock time for forward and backward computation of VGG-16 and Parareal VGG-16-4 for the input $\bx \in \mathbb{R}^{3 \times 224 \times 224}$.
Note that the virtual wall-clock time is measured under the assumption that all parallelizable procedures indicated in~\Cref{Alg:para3} are executed simultaneously.
It means that, it excludes the communication time among GPUs.
In~\Cref{gradvgg}, even if the VGG-16 has the small number of layers, it can be seen that the computation time of Parareal-VGG-16-4 is reduced in terms of virtual wall clock time.

Now, we compare the performance of the proposed methodology to data parallelism.
In data parallelism, the batch is split into subsets at each epoch.
Each subset is assigned one for each GPU and the gradient corresponding to the subset is computed in parallel.
Then the parameters of the neural network are updated by the averaged gradient over all subsets.
In what follows, Data Parallel VGG-16-4 denotes the data parallelized VGG-16 with $4$ GPUs.
We compare the error rate and the wall-clock time of each parallelized network with the ImageNet dataset.
To investigate the speed-up provided by each parallelism, we use the relative speed-up~(RS) introduced in \cite{fok2018decoupling}, which is defined by
\begin{equation}
\label{RS}
\mathrm{RS} = \frac{t_r-t_p}{t_r},
\end{equation}
where $t_p$ is the total elapsed time taken to complete training of the given parallelism and $t_r$ is that of the given feed-forward network.

\begin{table}
  \caption{Error rates~(\%) and wall-clock times on the ImageNet dataset. The wall-clock time is the total time taken to train a given network by $200$ epochs.
  Relative speed-up is measured according to~\cref{RS}.}
  \label{vggcomp}
  \centering
  \resizebox{\linewidth}{!}{
  \begin{tabular}{lcccc}
    \toprule
    Network & Parameters & Error rate~(\%) & Wall-clock time~(h:m:s)& RS~(\%) \\
    \midrule
    VGG-16 & 138.4M & 32.30 & 191:17:00 & 0.0 \\
    Data Parallel VGG-16-4 & 553.6M & 27.73 & 142:56:18 & 25.3 \\
    Parareal VGG-16-4&147.5M&30.97&188:22:18& 1.5 \\
    \bottomrule
  \end{tabular}
  }
\end{table}

\Cref{vggcomp} shows that the wall-clock time of Data Parallel VGG-16-4 is the shortest.
This is because Data Parallel VGG-16-4 has very small number of layers, which takes a short time for communication to average the gradients for updating parameters.
On the other hand, in the case of Parareal VGG-16-4, as shown in \Cref{gradvgg}, the computation time of the parallel subnetwork is very small, and the majority of the total computation time is the computation time of the coarse network.
Therefore, Parareal VGG-16-4 has a parallel structure, but the effect of parallelization is not significant.
In fact, the parareal algorithm works more effectively in networks with a large number of layers, but maintains accuracy and does not slow down even in networks with a small number of layers.

% Subsection: ResNet-1001
\subsection{ResNet-1001}
\begin{figure}
 \centering
 \includegraphics[width=0.8\linewidth]{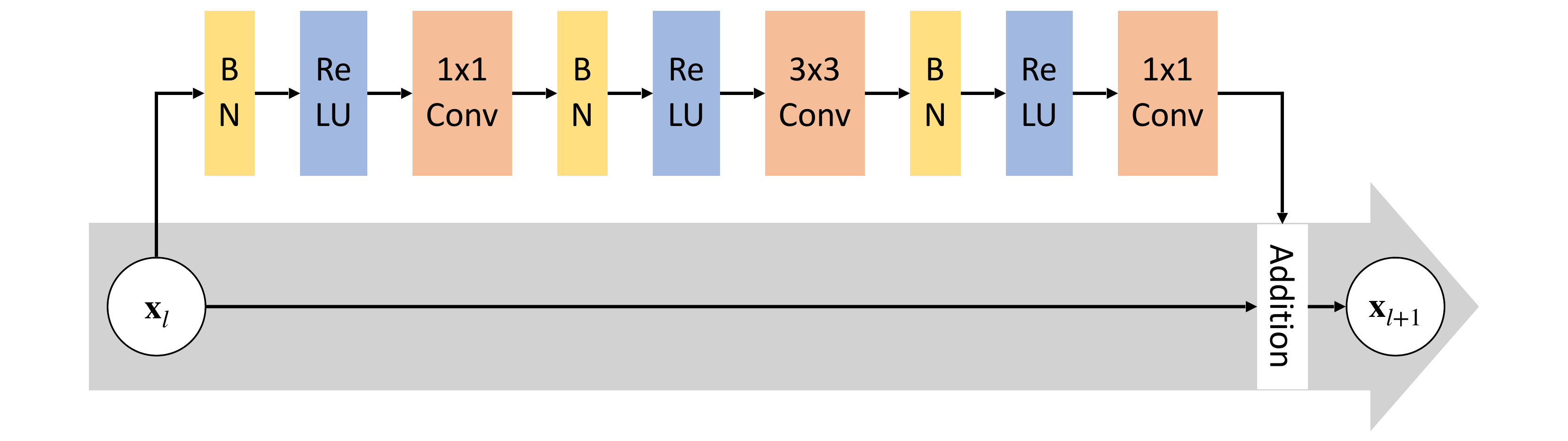}
 \caption{Bottleneck structure of an RU used in ResNet-1001. The first $1 \times 1$ convolution squeezes the number of channels of an input and the last $1 \times 1$ convolution increases the number of channels of an intermediate result.
 If $\bx_l$ and $\bx_{l+1}$ have different numbers of channels, then a $1 \times 1$ convolution is added to the skip connection in order to match the number of channels.}
 \label{ru}
\end{figure}

\begin{table}
 \caption{Architecture of ResNet-1001 for the dataset CIFAR-100.
 Downsampling with stride $2$ is performed in the second and third stages of the block-repetitive structure.
 A layer consisting of an $n \times n$ convolution with $k$-channel ouput, an average pooling with output size $1 \times 1$, and a $k$-way fully connected layer are denoted by [$n \times n, k$], avgpool, and $k$-d~FC, respectively.}
 \label{resnet}
 \centering
 \begin{tabular}{c|c|c}
 \toprule
 Layer & Output size & ResNet-1001 \\
 \midrule
 Preprocessing & $32 \times 32$ & [$3 \times 3$, $16$] \\ \hline
\multirow{3}{*}{\begin{tabular}[c]{@{}c@{}}\\~\\ Block-repetitive\\ substructure\end{tabular}} & $32 \times 32$ & $\begin{bmatrix} 1 \times 1,16 \\ 3 \times 3,16 \\ 1 \times 1,64 \end{bmatrix} \times 111$ \\ \cline{2-3}
 & $16 \times 16$ & $\begin{bmatrix} 1 \times 1,32 \\ 3 \times 3,32 \\ 1 \times 1,128 \end{bmatrix} \times 111$ \\ \cline{2-3}
 & $8 \times 8$ & $\begin{bmatrix} 1 \times 1,64 \\ 3 \times 3,64 \\ 1 \times 1,256 \end{bmatrix} \times 111$ \\ \hline
 Postprocessing & $1 \times 1$ & avgpool $+$ 100-d FC \\
 \bottomrule
 \end{tabular}
 \end{table}
Next, we will apply the proposed parareal neural network to ResNet~\cite{he2016deep,he2016identity}, which is typically one of the very deep neural networks.
In ResNet, an RU is constructed by adding a skip connection, i.e., it is written as
\begin{equation*}
\mathbf{x}_l = H_l(\mathbf{x}_{l-1}) + \mathbf{x}_{l-1}.
\end{equation*}
\Cref{ru} shows the structure of an RU, called bottleneck used in ResNet.
In particular, we deal with ResNet-1001 for the CIFAR dataset~\cite{krizhevsky2009cifar}, where $1,001$ is the number of layers.
ResNet-1001 consists of a single $3\times 3$ convolution layer, a sequence of 333~RUs with varying feature map dimensions, and a global average pooling followed by a fully connected layer.
\Cref{resnet} shows the details of the architecture of ResNet-1001 for the CIFAR-100 dataset.

As presented in \Cref{Sec:PNN}, ResNet-1001 can be decomposed as follows: a preprocessing operator $C_{\delta}$:~$X \rightarrow W_0$ as the $3 \times 3$ convolution layer, a block-repetitive substructure $g_{\phi}$:~$W_0 \rightarrow W_1$ as 333 RUs, and a postprocessing operator $h_{\varepsilon}$:~$W_1 \rightarrow Y$ consisting of the global average pooling and the fully connected layer.
More specifically, we have $X = \mathbb{R}^{3 \times 32 \times 32}$, $W_0 = \mathbb{R}^{16 \times 32 \times 32}$, $W_1 = \mathbb{R}^{256 \times 8 \times 8}$, and $Y = \mathbb{R}^m$ where $m$ is the number of classes of images.

\subsubsection{Parareal transformation}
The design of a parareal neural network with $N$ parallel subnetworks for ResNet-1001, denoted as \textit{Parareal ResNet-$N$}, can be completed by specifying the structures $g_{\phi_j}^j$, $C_{\delta_j}^j$, and $F_{\eta_j}^j$.
For convenience, the original neural network ResNet-1001 is called Parareal ResNet-1.
We assume that $N = 3N_0$ for some positive integer $N_0$.
We observe that $g_{\phi}$ can be decomposed as
\begin{equation*}
g_{\phi} = g_{\phi_{N}}^{N} \circ \dots \circ g_{\phi_{2N_0+1}}^{2N_0+1} \circ g_{\phi_{2N_0}}^{2N_0} \circ \dots \circ g_{\phi_{N_0+1}}^{N_0+1} \circ g_{\phi_{N_0}}^{N_0} \circ \dots \circ g_{\phi_{1}}^{1},
\end{equation*}
where each of $g_{\phi_{j}}^{j}$:~$X_{j-1} \rightarrow X_{j}$ consists of $\lceil 333/N \rceil$~RUs with
\begin{align*}
X_{j} = \begin{cases}\mathbb{R}^{64 \times 32 \times 32} & \textrm{ for } j=1,\dots,N_0,\\ \mathbb{R}^{128 \times 16 \times 16} & \textrm{ for } j=N_0+1,\dots,2N_0,\\ \mathbb{R}^{256 \times 8 \times 8} & \textrm{ for } j=2N_0+1,\dots,N,\end{cases} \quad \phi = \bigoplus_{j=1}^N \phi_j.
\end{align*}
Similarly to the case of VGG-16, the preprocessing operators are defined as follows: $C_{\delta_1}^1 = C_{\delta}$ and $C_{\delta_j}^j$ for $j>1$ consists of a $1 \times 1$ convolution to match the number of channels after appropriate number of $3 \times 3$ max pooling layers with stride $2$ to match the image size.
For the coarse network, we first define a coarse RU consisting of two $3 \times 3$ convolutions and skip-connection.
If the downsampling is needed, then the stride of first convolution in coarse RU is set to $2$.
We want to define $F_{\eta_{j}}^j$:~$X_j \rightarrow X_{j+1}$ having smaller number of (coarse) RUs than $g_{\phi_{j+1}}^{j+1}$ but a similar coverage to $g_{\phi_{j+1}}^{j+1}$.
Note that the receptive field of $g_{\phi_j}^j$ covers the input size $32 \times 32$.
In terms of receptive field, even if we construct $F_{\eta_j}^j$ with fewer coarse RUs than $\lceil 333/N \rceil$, it can have similar coverage to the parallel subnetwork~$g_{\phi_{j}}^{j}$.

\begin{table}
  \caption{Error rates~(\%) on the CIFAR-100 dataset of Parareal ResNet-3, where $N_c$ is the number of coarse RUs in each component $F_{\eta_j}^j$ of the coarse network.}
  \label{nc}
  \centering
  \begin{tabular}{c|c}
    \toprule
    $N_c$   & Error rate~(\%) \\
    \midrule
    1 & 23.47 \\
    2 & 22.20 \\
    4 & 21.14 \\
    8 & 20.85 \\
    16& 20.83  \\
    \midrule
    \multicolumn{2}{l}{Reference~(ResNet-1001): 21.13} \\
    \bottomrule
  \end{tabular}
\end{table}

Let $N_c$ be the number of coarse RUs in $F_{\eta_j}^j$ of the coarse network.
Here, we present how $N_c$ affects the performance of the parareal neural network.
Specifically, we experimented with Parareal ResNet-3 with $g_{\phi_j}^j$ consisting of $111$ RUs.
\Cref{nc} shows the error rates of Parareal ResNet-3 with respect to various $N_c$.
The performance of Parareal ResNet-3 with $N_c = 4$ is similar to ResNet-1001.
In fact, four coarse RUs consist of one $3 \times 3$ convolution with stride $2$ and seven $3 \times 3$ convolutions
so that the receptive field covers $31 \times 31$ pixels, which is almost all pixels of the input $\bx \in X$.
Parareal ResNet-3 shows better performance than ResNet-1001 when $N_c > 4$.
Since Parareal ResNet-3 with $N_c = 4$ shows similar performance to ResNet-1001, we may say that $1$ RUs in $F_{\eta_j}^j$ can approximate $111/4 \approx 28$ RUs in $g_{\phi_j}^j$.
Generally, if we use $N$ parallel subnetworks $(N \ge 3)$, each $333/N$~RUs in $g_{\phi_j}^j$ can be approximated by the $N_c$ RUs in $F_{\eta_j}^j$ whenever we select $N_c=\lceil 12/N \rceil$.

\subsubsection{Numerical results}
With fixed $N_c=\lceil 12/N \rceil$, we report the classification results of Parareal ResNet with respect to various $N$ on datasets CIFAR-10, CIFAR-100, SVHN, and MNIST.
Decay of the training loss of Parareal ResNet-$N$~($N=1,3,6,12,18$) for various datasets is depicted in~\Cref{logloss}.
As shown in~\Cref{logloss}, the training loss converges to a smaller value for larger $N$.
It seems that such a phenomenon is due to the increase of the number of parameters in Parareal ResNet when $N$ increases.
In the cases of MNIST and SVHN, oscillations of the training loss are observed.
It is well-known that such oscillations are caused by excessive weight decay and can be removed by dropout~\cite{zagoruyko2016wide}.
\begin{figure}
  \centering
  \includegraphics[width=0.8\linewidth]{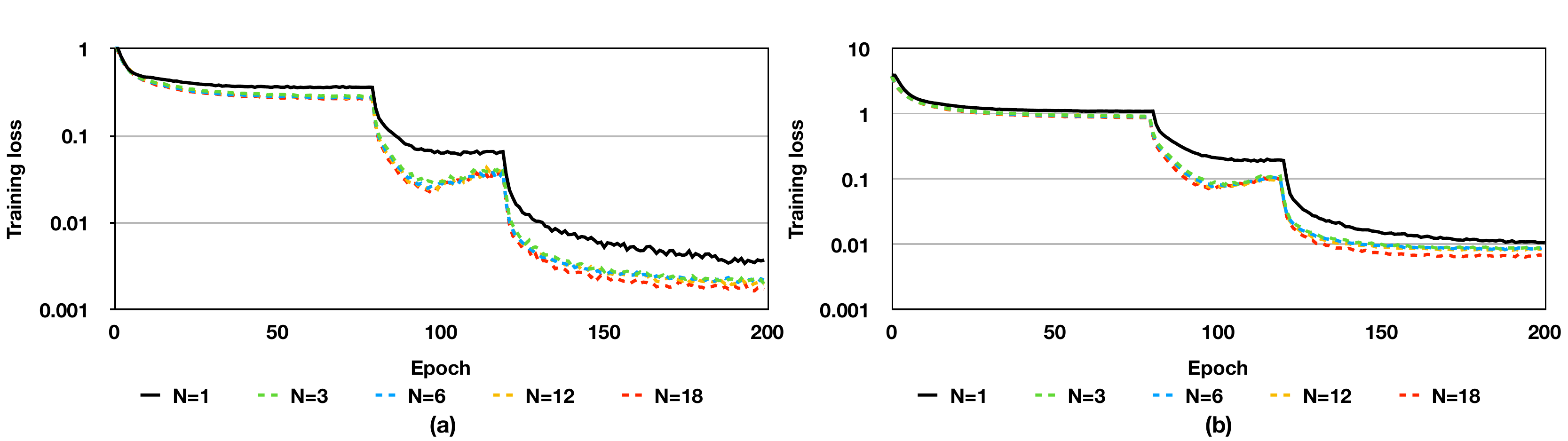}\\
  \includegraphics[width=0.8\linewidth]{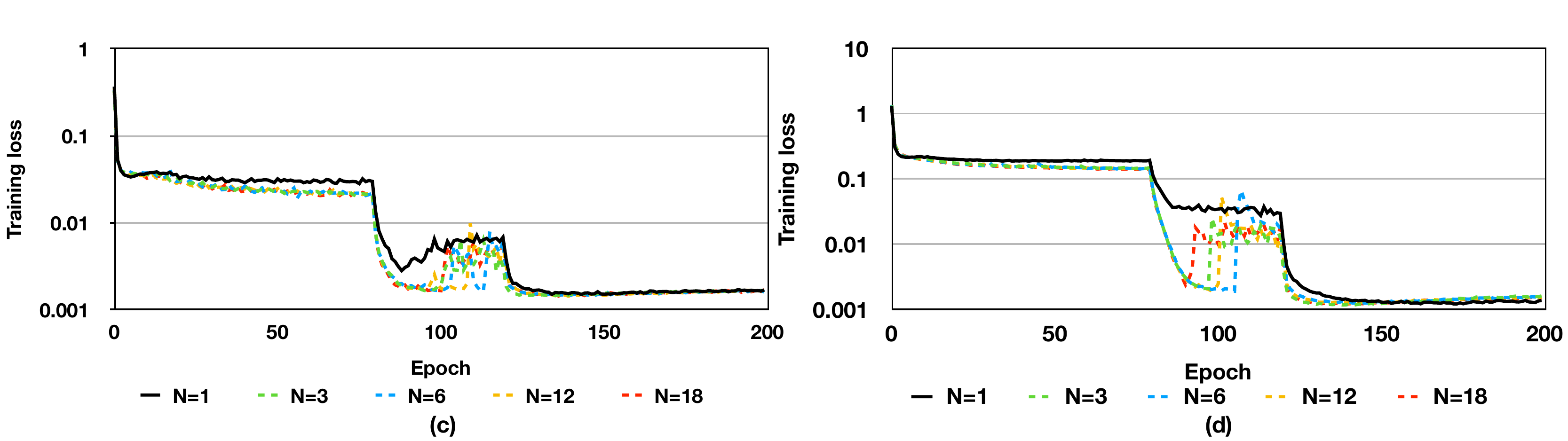}
  \caption{Comparison of the training loss for Parareal ResNet-$N$~($N=1,3,6,12,18$) on various datasets: \textbf{(a)} CIFAR-10, \textbf{(b)} CIFAR-100, \textbf{(c)} MNIST, and \textbf{(d)} SVHN results.}
  \label{logloss}
\end{figure}

We next study how the classification performance is affected by $N$, the number of subnetworks.
\Cref{errorrate} shows that the error rates of Parareal ResNet-$N$ are usually smaller than ResNet-1001.
There are some exceptional cases that the error rate of Parareal ResNet-$N$ exceeds that of ResNet-1001: $N=6,12$ and $18$ for SVHN.
It is known that these cases occur when there are oscillations in the decay of the training loss~\cite{zhang2018three}; see~\Cref{logloss}.
As we mentioned above, such oscillations can be handled by dropout.

\begin{table}
  \caption{Error rates~(\%) on the CIFAR-10, CIFAR-100, MNIST, and SVHN datasets of Parareal~ResNet-$N$ ($N=1,3,6,12,18$) with $N_c = \lceil 12/N \rceil$.}
  \label{errorrate}
  \centering
  \resizebox{\linewidth}{!}{
  \begin{tabular}{cccccccc}
    \toprule
    $N$ &\begin{tabular}{c}Subnetwork \\ Parameters \end{tabular}&\begin{tabular}{c}Coarse network \\ Parameters \end{tabular}& \begin{tabular}{c}Total \\ Parameters \end{tabular}& CIFAR-10 & CIFAR-100 & MNIST & SVHN \\
    \midrule
    1 &-&-&10.3M& 4.96 & 21.13 & 0.34 & 3.17 \\ \hline
    3 &3.4M& 5.6M& 15.9M& 4.61 & 21.14 & 0.31 & 3.11 \\
    6 &1.7M&5.7M& 16.1M & 4.20 & 20.87 & 0.31 & 3.21\\
    12 &0.9M& 5.8M&16.2M & 4.37 & 20.42 & 0.28 & 3.25\\
    18 &0.6M&8.9M& 19.4M & 4.02 & 20.40 & 0.33 & 3.29\\
    \bottomrule
  \end{tabular}}
\end{table}

Like the case of VGG-16, we investigate the elapsed time for forward and backward propagations of parareal neural networks.
\Cref{grad} shows the virtual wall-clock time for forward and backward computation of Parareal ResNet-$N$ with various $N$ for the input $\bx \in \mathbb{R}^{3 \times 32 \times 32}$.
As shown in~\Cref{grad}, the larger $N$, the shorter the computing time of the parallel subnetworks $g_{\phi_j}^j$, while the longer the computing time of the coarse network.
This is because as $N$ increases, the depth of each parallel subnetwork $g_{\phi_j}^j$ becomes shallower while the number of $F_{\eta_j}^j$ in the coarse network increases.
On the other hand, each preprocessing operator $C_{\delta_j}^j$ is designed to be the same as or similar to the preprocessing operator $C_\delta$ of the original neural network and the postprocessing operator $h_\varepsilon$ is the same as the original one.
Therefore, the computation time for the pre- and postprocessing operators does not increase even as $N$ increases.
As a trade-off between the decrease in time in parallel subnetworks and the increase in time in the coarse network, we observe in~\Cref{grad} that the elapsed time of forward and backward computation decreases as $N$ increases when $N \leq 18$.

\begin{table}
  \caption{Forward/backward computation time for Parareal ResNet-$N$~($N=1,3,6,12,18$).
  The time is measured in one iteration for CIFAR-100 dataset input $\bx \in \mathbb{R}^{3 \times 32 \times 32}$ with batch size $128$.
  }
  \label{grad}
  \centering
  \begin{tabular}{cccccc}
    \toprule
    \multicolumn{6}{c}{Virtual wall-clock time~(ms)}\\
    \cmidrule(r){1-6}
    $N$ & Preprocessing     & \begin{tabular}{c}Parallel\\subnetworks\end{tabular}    & \begin{tabular}{c}Coarse\\network\end{tabular} & Postprocessing  & Total \\
    \midrule
    1&0.25/6.46&	443.81/1387.62&	-&	0.06/3.18&	444.12/1397.26\\
    \midrule
    3&0.25/6.45&	131.92/458.87&	10.01/97.60&	0.06/3.71&	142.24/566.63\\
    6&0.27/6.42&	67.59/219.72&	14.68/137.08&	0.06/3.33&	82.60/366.55\\
    12&0.28/6.59&	48.47/113.33&	17.97/149.52&	0.06/3.63&	66.78/273.07\\
    18&0.29/6.17&	30.40/77.84&	27.87/163.25&	0.06/3.64&	58.62/250.90\\
    24&0.29/6.58&	22.71/58.04&	41.03/242.87&	0.06/3.54&	64.09/311.03\\
    \bottomrule
  \end{tabular}
\end{table}

\begin{remark}
In our experiments, the decomposition of the block-repetitive structure of ResNet-1001 was done so that each parallel subnetwork of Parareal ResNet had the same number of RUs.
However, such a decomposition is not optimal because each RU has the different computational cost.
It is expected that the uniform decomposition of the Parareal ResNet in terms of computational cost will further improve the parallel efficiency reducing virtual wall-clock time even above $N = 18$.
\end{remark}

Finally, we compare the performance of the Parareal ResNet to two existing approaches of parallel computing for neural networks: WarpNet~\cite{fok2018decoupling} and data parallelism.
WarpNet replaces every $K$ RUs in ResNet by a \textit{$K$-warp operator}, which can be evaluated using RUs arranged in a parallel manner and their derivatives.
For example, we display the parallel structure of the $2$-warp operator in~\Cref{warp}; see~\cite{fok2018decoupling} for further details.
In what follows, Data Parallel ResNet-$N$ and WarpNet-$N$ denote the data parallelized ResNet-1001 with $N$ GPUs and WarpNet with $(N-1)$-warp operators using $N$ GPUs, respectively.
We compare the error rate and the wall-clock time of each parallelized network with the CIFAR-100 dataset.
\Cref{comparison} shows that only RS of Parareal ResNet is greater than 0.
This is because when the number of layers of WarpNet and Data Parallel ResNet is very large, data communication between GPUs is required, which can frequently cause communication bottlenecks at each layer.
On the other hand, in the forward propagation of Parareal ResNet, only a single communication process among GPUs is needed: communication between each parallel subnetwork and the coarse network.
Therefore, Parareal ResNet is relatively free from communication bottlenecks compared to the other methods.
In conclusion, Parareal ResNet outperforms the other two methods in parallelization of deep neural networks in terms of training time reduction.
\begin{figure}
\centering
\includegraphics[width=0.7\linewidth]{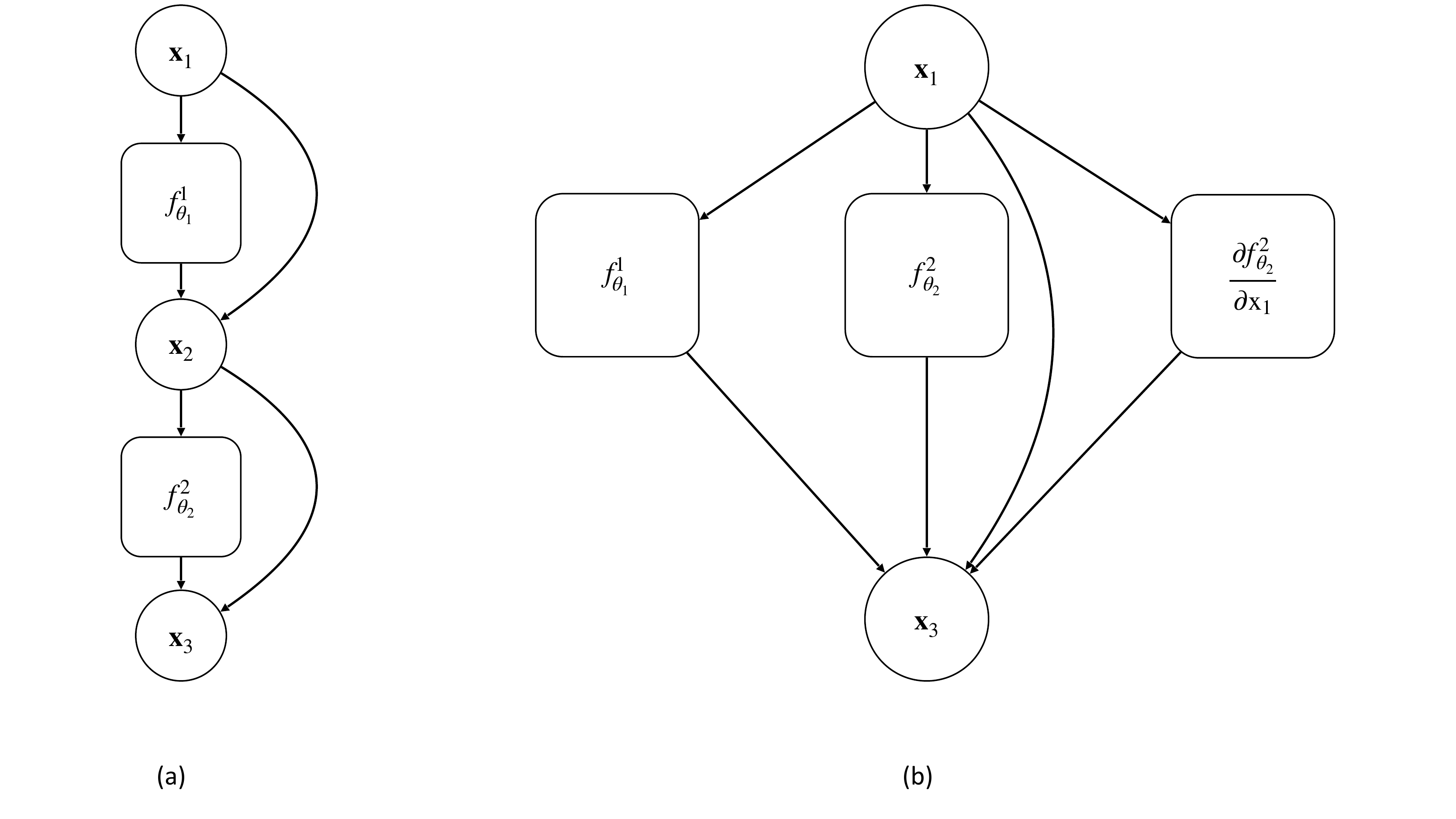}
\caption{Structure of the $K$-warp operator, $K=2$: \textbf{(a)} Two RUs $f_{\theta_1}^1$ and $f_{\theta_2}^2$ in ResNet. \textbf{(b)}~The corresponding $2$-warp operator in WarpNet.}
\label{warp}
\end{figure}
\begin{table}
  \caption{Error rates~(\%) and wall-clock times on the CIFAR-100 dataset. The wall-clock time is the total time taken to train a given network by $200$ epochs.}
  \label{comparison}
  \centering
  \resizebox{\linewidth}{!}{
  \begin{tabular}{lcccc}
    \toprule
    Network & Parameters & Error rate~(\%) & Wall-clock time~(h:m:s)& RS~(\%) \\
    \midrule
    ResNet-1001 & 10.3M & 21.13 & 22:44:53 & 0.0\\
    \midrule
    Data Parallel ResNet-3& 30.9M & 21.09 & 46:50:20 & -105.9\\
    Parareal ResNet-3& 15.9M& 21.14 & 16:28:38 & 27.6\\
    WarpNet-3 & 15.3M & 19.95 & 90:56:46 & -299.8\\
    \midrule
    Data Parallel ResNet-6& 61.8M & 20.42 & 68:10:06 & -199.7\\
    Parareal ResNet-6& 16.1M& 20.87 & 11:48:13 & 48.1\\
    \bottomrule
  \end{tabular}
  }
\end{table}

\begin{remark}
  In terms of memory efficiency, Data Parallel VGG-16 and Data Parallel ResNet allocate all parameters to each GPU, which is a waste of memory of GPUs.
  WarpNet-$N$ requires duplicates of parameters since it computes RUs and their derivatives simultaneously at forward propagation so that the memory efficiency is deteriorated.
  On the other hand, in Parareal VGG-16 and Parareal ResNet, the entire model can be equally distributed to each GPU as a parallel subnetwork; only a single GPU requires additional memory to store the parameters of the coarse network which is much smaller than the entire model.
  Therefore, the proposed Parareal neural network outperforms the other two methods in the terms of memory efficiency as well.
\end{remark}

%%%%%%%%%%%%%%%%%%%%%%%%%%%%%%%%%%%%%%%%%%%%%%%%%%%%%%%%%%%%%%%%%%%%%%
\section{Conclusion}
\label{Sec:Conclu}
In this paper, we proposed a novel methodology to construct a parallel neural network called the parareal neural network, which is suitable for parallel computation using multiple GPUs from a given feed-forward neural network.
Motivated by the parareal algorithm for time-dependent differential equations, the block-repetitive part of the original neural network was partitioned into small pieces to form parallel subnetworks of the parareal neural network.
The coarse network that corrects differences at the interfaces among subnetworks was introduced so that the performance of the resulting parareal network agrees with the original network.
As a concrete example, we presented how to design the parareal neural network corresponding to VGG-16 and ResNet-1001.
Numerical results were provided to highlight the robustness and parallel efficiency of Parareal VGG-16 and Parareal ResNet.
To the best of our knowledge, the proposed methodology is a new kind of multi-GPU parallelism in the field of deep learning.

\section*{Acknowledgments}
The authors would like to thank Prof. Ganguk Hwang for his insightful comments.

\bibliographystyle{siamplain}
\bibliography{refs_Parareal_NN}

\end{document}